\documentclass[12pt,notitlepage]{article}
\frenchspacing
\usepackage{ljm-auth}
\usepackage{amsmath}
\usepackage{amssymb}
\usepackage{amscd}
\usepackage{amsfonts}
\usepackage{amsthm}
\usepackage{mathrsfs}
\usepackage[matrix,arrow,curve]{xy}

  \newtheorem{theorem}{Theorem}[section]
  \newtheorem{prop}[theorem]{Proposition}

\theoremstyle{remark}
  \newtheorem{rem}{Remark}

\newcommand{\Dj}{\hbox to 8pt{\raisebox{.4\height}{-}\hss D}}

\newcommand{\ac}{\ensuremath{\mathcal A}}

\newcommand{\hc}{\ensuremath{\mathcal H}}

\newcommand{\uc}{\ensuremath{\mathcal U}}

\newcommand{\xc}{\ensuremath{\mathcal X}}

\newcommand{\g}{\ensuremath{\mathfrak{g}}}

\newcommand\beq{\begin{equation}}
\newcommand\eeq{\end{equation}}
\newcommand\bqa {\begin{eqnarray}}
\newcommand\eqa {\end{eqnarray}}

\newcommand{\bear}{\begin{array}}
\newcommand{\enar}{\end{array}}


\usepackage{graphicx}
\usepackage{wrapfig}

\textwidth 170mm \textheight 225mm \topmargin -10pt \oddsidemargin
-.5cm \evensidemargin -0.3cm
\sloppy

\author{G.~Sharygin
}
\crauthor{G.~Sharygin} 

\tit{Deformation quantization and the action of Poisson vector fields}
\shorttit{Deformation quantization and vector fields} 

\setcounter{page}{3}

\begin{document}

\maketit

\address{Moscow State University, Leninskie gory, GSP-1, 119991 Moscow, Russia}

\email{sharygin@itep.ru}

\Received{28.11.2016}

\abstract{As one knows, for every Poisson manifold $M$ there exists a formal noncommutative deformation of the algebra of functions on it; it is determined in a unique way (up to an equivalence relation) by the given Poisson bivector. Let a Lie algebra $\mathfrak g$ act by derivations on the functions on $M$. The main question, which we shall address in this paper is whether it is possible to lift this action to the derivations on the deformed algebra. It is easy to see, that when dimension of $\mathfrak g$ is $1$, the only necessary and sufficient condition for this is that the given action is by Poisson vector fields. However, when dimension of $\mathfrak g$ is greater than $1$, the previous methods do not work. In this paper we show how one can obtain a series of homological obstructions for this problem, which vanish if there exists the necessary extension.}

\notes{0}{
\subclass{46L65, 46L55, 13D03} 
\keywords{deformation quantization, Hochschild cohomology}%
\thank{This work was supported by the Russian Science Foundation grant 16-11-10069} }

 

\section{Introduction}
We begin with few introductory and motivational remarks, needed to acquaint the reader with the basic ideas and definitions of the theory under consideration. We shall give only brief descriptions of results; interested reader should consult the referenced papers for details. 
\subsection{Deformation theory}
Let \ac\ be a complex commutative algebra, for example, $\ac=C^\infty(M)$ the algebra of $\mathbb C$-valued functions on a smooth manifold $M$. 
One can define its \textit{deformation quantization} as a new associative $\hbar$-linear multiplication $\ast$ on the space of formal power series (with respect to the variable $\hbar$) with coefficients from $\ac$. In case $\ac=C^\infty(M)$ one usually asumes that for any $f,\,g\in\ac$ the product given by a formal power series of \textit{bidifferential operators}:
$$
f\ast g=fg+\sum_{k=1}^\infty\hbar^kB_k(f,g).
$$
Here $fg$ is the usual (commutative) product in \ac. Two deformations $\ast_1,\,\ast_2$ are said to be \textit{equivalent} if there exists a formal power series of differential operators $T$,
$$
T(f)=f+\sum_{k=1}^\infty \hbar^kT_k(f)
$$
for all $f\in\ac$, such that
$$
T(f\ast_1 g)=T(f)\ast_2T(g).
$$
In fact it is easy to show that up to this equivalence one can always choose $B_1(f.g)=\{f,g\}$, where $\{,\}$ denotes a Poisson bracket; in particular, in the case $\ac=C^\infty(M)$ this means that $M$ should bear a Poisson structure. Recall that Poisson structure on a manifold is determined by a Poisson bivector $\pi$, i.e. by a section of the exterior square of the tangent bundle on the manifold, whose Schouten brackets with itself vanish. Given such bivector we define the Poisson bracket on $C^\infty(M)$ as
$$
\{f,\,g\}=\pi(df,dg).
$$
This bracket verifies the Jacobi identity and Leibniz rule with respect to the product of functions.

An important particular case of this construction is given by the Kirillov-Kostant bracket on a coadjoint space $\g^*$ of a Lie algebra $\g$.  Thus, in this case $M=\g^*$ is an affine space, equipped with the following Poisson bracket:
$$
\{f,\,g\}(x) = <[df,\,dg], x>.
$$
Here $f, g \in C^{\infty}(\g^*),\ x \in \g^*$, and we use the angular brackets to denote the natural pairing of dual spaces. Let us fix a basis in $\g$. The Poisson bivector in this case can be expressed as a linear expression in coordinates $x_k$ in $\g^*$ in terms of the structure constants $c^k_{ij}$ of \g: 
$$
\pi = c^k_{ij}x^k\partial_i\otimes\partial_j.
$$

We shall now assume that the first term in the deformation series coincides with the Poisson bivector, i.e. that up to higher degrees in $\hbar$ the commutator of any two functions $f$ and $g$ (with respect to the new product) is equal to their Poisson bracket:
$$
[f,g]=\hbar\{f,g\}+O(\hbar^2).
$$
The main problem of the deformation quantization program is to classify all possible deformation quantizations of an algebra up to the equivalence. This problem has been solved in different ways by different authors and under different assumptions. The principal result here is due to Kontsevich \cite{Kon97,Kon03}, where the main theorem is proved, stating that there always exists a deformation quantization of a Poisson variety $(M,\pi)$; this deformation is given by an explicit formula. It is also proved that the quantizations correspond bijectively to the equivalence classes of formal power series of bivectors $\Pi=\pi+\sum_{k\ge 1}\hbar^k\pi_k$, verifying the following variant of the Jacobi identity
$$
[\Pi,\Pi]=0,
$$ 
where the brackets denote the $\hbar$-linear extension of the Schouten brackets.

In many cases deformation quantization of a Poisson algebra can be described in a rather simple way. For example, when the manifold $M=\mathbb R^{2n}$ with coordinates $p_1,\dots,p_n,q_1,\dots,q_n$ and the Poisson structure is given by a constant symplectic form, which can be written as
$$
\omega=\sum_{k=1}^ndp_k\wedge dq_k,
$$
then the deformation quantization can be chosen to be the Weyl algebra 
$$
\ac^n_\hbar=\mathbb C\langle x_1,\dots,x_n,\partial_1,\dots,\partial_n\rangle/\{[x_k,x_j]=[\partial_k,\partial_j]=0,\,[\partial_k,x_j]=i\hbar\delta_{kj}\},
$$
where $\mathbb C\langle x_1,\dots,x_n,\partial_1,\dots,\partial_n\rangle$ denotes the free algebra. In case, when the Poisson manifold is equal to the coadjoint representation of a semisimple Lie group, endowed with the Kirillov-Kostant symplectic structure, the quantized algebra can be identified with the universal enveloping algebra (see \cite{Kon97}). More generally, if $M=T^*X$ with the usual symplectic structure, deformation quantization is closely related to the algebra of differential operators on $X$ (\cite{Fedosov}). 

\subsection{The main question}
In this paper we deal with the following question: suppose there are additional structures (e.g. symmetries, a group or an algebra action, complex structure, etc.) of the Poisson structure on $M$. When is it possible to transfer them from the ``classical''  to quantum case, to the deformed algebras?

In addition to being interesting on its own right, this question might be quite important for many applications. For example, if there is a commutative Lie subalgebra in the functions (with respect to the Poisson bracket), then on one hand it induces an action of commutative Lie algebra on $M$, which we can try to pull to the deformed algebra; if the latter action is by internal derivations, this would give us a commutative subalgebra in the deformation of the manifold. On the other hand, when $M=\mathbb R^n$, its defomation is given by Weyl algebra and the structure of commutative subalgebras in Weyl algebra is closely related to the famous Jacobian problem (see \cite{Kon-Bel}). Commutative subalgefbras of the differential operators are closely connected to the integrable systems theory (see \cite{Krich}), and are often called ``quantum integrable systems''.

In this paper we shall investigate the following general form of this question: let a Lie algebra $\g$ act on $C^\infty(M)$ by differentiations (i.e. represented in the Lie algebra $Vect(M)$ of vector fields on a Poisson manifold $M$), which preserve the Poisson structure:
$$
\xi(\{f,\,g\})=\{\xi(f),\,g\}+\{f,\,\xi(g)\},
$$
(in the terms of vector fields, one can say, that $L_\xi\pi=0$, where $L_\xi$ is the Lie derivative, and $\pi$ the Poisson bivector); vector fields with this property are called \textit{Poisson fields}. A particular case of Poisson fields is given by the \textit{Hamiltonian fields}, or skew gradients of functions; the easiest way to define the skew gradient $X_f$ of a function $f$ is by the formula
$$
X_f(g)=-\{f,g\}
$$
for all functions $g\in C^\infty(M)$. Now the question is: \textit{is it possible to find an extension of this representation to an action of \g\ on the quantized algebra $(C^\infty(M)[[\hbar]],\,\ast)$ by derivations, so that for any $\xi$ in \g\ the corresponding derivation $\hat\xi$ of $(C^\infty(M)[[\hbar]],\,\ast)$ would have the form $\hat\xi=\xi+o(\hbar)$}?

More accurately, the question is, if one can find a linear map
$$
\g\to Der(C^\infty(M)[[\hbar]]),\ \xi\mapsto\hat\xi,
$$
such that $\hat\xi=\xi+o(\hbar)$ and
$$
\widehat{[\xi,\eta]}=[\hat\xi,\hat\eta].
$$
Observe that if this is possible and when $\g=\mathbb R^n$ is the commutative Lie algebra, induced by an integrable system $f_1,\dots,f_n$, then the quantum integrability is reduced to the question, whether the derivations $\hat\xi$ are inner, or not.

\subsection{Notations, agreements and main results}
In what follows all algebras are considered over fields $\mathbb R$ or $\mathbb C$; $M$ is a smooth compact cosed Poisson manifold, $C^\infty(M)$ its algebra of smooth functions, $\pi$ is the bivector and $\{\,\}$ the Poisson bracket. Unless otherwise stated we shall deal with the generic deformation formula (e.g. Kontseveich's formula, see \cite{Kon03} and section \ref{sec:Kon1}), so that 
$$
f\ast g=fg+\frac12\hbar\{f,g\}+o(\hbar).
$$

In the next section we recall basic facts  and constructions from the theory of Hochschild cohomology, $L_\infty$-algebras etc., which we shall use below. We try to make our exposition as much self-contained as we can. Major references for these sections are Loday's book \cite{Loday92}, and Kontsevich's seminal paper \cite{Kon03}, and references therein (see also a very nice survey of Keller \cite{Keller03}).

We further begin answering the question we posed. First we consider the simplest case, when dimension of the Lie algebra is equal to $1$, so that $\g$ is equal to the linear span $\mathbb R\langle X\rangle$ where $X$ is a Poisson vector field, i.e. $L_X\pi=0$, or 
$$
X(\{f,g\})=\{X(f),g\}+\{f,X(g)\}.
$$
In this case in order to quantize the Lie algebra action, it is enough to find one differentiation $\hat X$ of the quantized algebra, such that $$
\hat X=X+o(\hbar).
$$ 
In section \ref{sec:Kq} we show, that this problem can be solved by an application of Kontsevich's quasi-isomorphism map from \cite{Kon03}, see also \cite{CvdB}. However the same method fails for generic Lie algebras, yielding only an action up to inner derivatives of the algebra $(C^\infty(M),\ast)$. In order to deal with  general situation we develop in further sections, \ref{sec:def1}, \ref{sec:def2} an obstruction theory and show that the quantization exists if certain classes in the cohomology of $\mathfrak g$ with coefficients in Lichnerowicz-Poisson and Hochschild cohomology of $M$ vanish. It follows from our previous considerations that these classes can always be made trivial for just one vector field (however, see remark \ref{rem:partialcase}). In the more general case, when $\mathrm{dim}\,\g>1$, acting in a similar way we obtain a series of obstructions with values in Lie algebra cohomology with coefficients.

\section{Preliminary information and constructions}
This section contains brief outlines of the basic constructions and notions that are used in deformation theory. Interested reader can obtain more information from the papers to which we refer below.
\subsection{Hochschild cohomology and deformations}
\label{sec:Hhdef}
Let $M,\pi$ be a Poisson manifold. Then we shall always assume that the deformation quantization of its functions algebra is given by the following formal series in $\hbar$ (c.f. \cite{Keller03}):
\begin{equation}
\label{eq:defq1}
f\ast g=fg+\frac{\hbar}{2}\{f,g\}+\sum_{k=2}^\infty \hbar^k B_k(f,g),
\end{equation}
for all $f,\,g\in C^\infty(M)$. Here $B_k(f,g)$ are some linear (over numbers) differential operators, applied to $f$ and $g$. The associativity condition
\begin{equation}
\label{eq:assoc1}
(f\ast g)\ast h=f\ast(g\ast h),
\end{equation}
for all $f,\,g,\,h\in C^\infty(M)$ can be expanded as a series of partial differential equations on operators $B_k$. These equations are rather complicated. In order to have a more convenient view on this problem it is better to consider the Hochschild complex version of this equation. 

Recall (the book \cite{Loday92} is the main reference for this subject), that for an algebra $A$, its Hochschild cohomology is defined as the cohomology of the complex
$$
C^*(A)=\bigoplus_{n\ge0}Hom(A^{\otimes n},A),
$$
with differential
$$
\begin{aligned}
\delta\varphi(f_1,\dots,f_{p+1})&=f_1\varphi(f_2,\dots,f_{p+1})+\sum_{i=1}^p(-1)^i\varphi(f_1,\dots,f_if_{i+1},\dots,f_{p+1})\\
                                                &\qquad\qquad\qquad\qquad\quad\!\!+(-1)^{p+1}\varphi(f_1,\dots,f_p)f_{p+1}.
\end{aligned}
$$
Here $\varphi\in C^p(A)=Hom(A^{\otimes p},A)$ and $f_1,\dots,f_{p+1}\in A$ are arbitrary elements. In the important particular case, when $A=C^\infty(M)$ for a smooth manifold $M$, one often reduces this complex to the so-called \textit{local Hochschild cohomology complex} $C_{loc}(C^\infty(M))$, in which the spaces of linear maps $Hom(A^{\otimes n},A)$ are replaced with the spaces of \textit{local cochains}, $Hom_{loc}(A^{\otimes n},A)$, given by the polydifferential operators on functions; recall, that a map $\varphi:A^{\otimes p}\to A$ is called polydifferential operator, if for any $k=1,\dots,p$ and any $f_i\in C^\infty(M),\ i=1,\dots,\widehat k,\dots,p$ (here and elsewhere the hat\ \ $\widehat{}$\ \ over an element of an array means that this element is missing), the map
$$
\varphi_k(f)=\varphi(f_1,\dots,f_{k-1},f,f_{k+1},\dots,f_p):C^\infty(M)\to C^\infty(M)
$$
is a linear (over the field) differential operator.

Unlike the usual Hochschild cohomology of $C^\infty(M)$, its local cohomology can be easily calculated, see for instance \cite{CGdW83}: the resulting theorem, usually called (a cohomological version of) \textit{Hochschild-Kostant-Rosenberg} theorem, says that the following map induces an isomorphism in cohomology
$$
\begin{aligned}
\chi: \Gamma(\Lambda^*TM)&\to C^*_{loc}(C^\infty(M))\\
\chi(\Phi)(f_1,\dots,f_p)&=\frac{1}{p!}\sum_{\sigma\in S_p}(-1)^\sigma\Phi(df_{\sigma(1)},df_{\sigma(2)},\dots,df_{\sigma(p)}),
\end{aligned}
$$
for any polyvector field $\Phi\in\Gamma(\Lambda^pTM)$ and any functions $f_1,\dots,f_p\in C^\infty(M)$; here on the left hand side we use zero differential, and on the right the Hochschild differential $\delta$. In particular local Hochschild cohomology of the algebra $C^\infty(M)$ is equal to the space of polyvector fields: $H^*_{loc}(C^\infty(M))=\Gamma(\Lambda^*TM)$. Below we shall usually omit the adjective local, when speaking about the Hochschild cohomology of the smooth functions on a manifold.

Hochschild complex of an algebra $A$ bears many additional algebraic structures. Two most important of them are the cup-product and Gerstenhaber bracket. The cup-product $C^p(A)\otimes C^q(A)\to C^{p+q}(A)$ is determined by the formula:
$$
(\varphi\cup\psi)(f_1,\dots,f_{p+q})=\varphi(f_1,\dots,f_p)\psi(f_{p+1},\dots,f_{p+q}),
$$
where $\varphi\in C^p(A),\ \psi\in C^q(A)$. This is an associative product; differential $\delta$ verifies the graded Leibniz rule with respect to this product:
$$
\delta(\varphi\cup\psi)=\delta(\varphi)\cup\psi+(-1)^p\varphi\cup\delta(\psi).
$$
The Gerstenhaber bracket (c.f. \cite{Keller03}) is a map $[,]:C^p(A)\otimes C^q(A)\to C^{p+q-1}(A)$, determined by the formula
$$
{}[\varphi,\psi]=\sum_{k=1}^p(-1)^{k(q-1)}\varphi\circ_k\psi-(-1)^{(p-1)(q-1)}\sum_{l=1}^q(-1)^{l(p-1)}\psi\circ_l\varphi,
$$
where the composition maps $\circ_k$ are defined by the formulas:
$$
\varphi\circ_k\psi(f_1,\dots,f_{p+q-1})=\varphi(f_1,\dots,f_{k-1},\psi(f_k,\dots,f_{k+q-1}),f_{k+q},\dots,f_{p+q-1}),
$$
i.e. the value of $\psi$ is substituted as an argument into $\varphi$. The map $[,]$ is skew-symmetric with respect to shifted dimension:
$$
[\varphi,\psi]=-(-1)^{(p-1)(q-1)}[\psi,\varphi]
$$
and direct computations show that it verifies the graded Jacobi identity
$$
[\varphi,[\psi,\omega]]=[[\varphi,\psi],\omega]+(-1)^{(p-1)(q-1)}[\psi,[\varphi,\omega]],
$$
or, in more symmetric form:
$$
(-1)^{(p-1)(r-1)}[\varphi,[\psi,\omega]]+(-1)^{(q-1)(p-1)}[\psi,[\omega,\varphi]]+(-1)^{(r-1)(q-1)}[\omega,[\varphi,\psi]]=0
$$
for any $\omega\in C^r(A)$. Observe, that if $\mu:A\otimes A\to A$ is the product map, i.e. if we regard the product in $A$ as an element in $C^2(A)$, then one can define the differential $\delta$ by the formula
$$
\delta(\varphi)=-[\mu,\varphi],
$$
thus it follows from the Jacobi identity that a skew-symmetric version of Leibniz rule holds for $\delta$ with respect to the bracket $[,]$:
$$
\delta[\varphi,\psi]=[\delta\varphi,\psi]+(-1)^{p-1}[\varphi,\delta\psi].
$$
It is clear, that these two operations preserve the space of local cochains. One can now improve the statement of the Hochschild-Kostant-Rosenberg theorem as follows: \textit{the product and the bracket in (local) Hochschild cohomology of the algebra $C^\infty(M)$, induced from the $\cup$-product and the bracket $[,]$ on the local complex coincide with the wedge-product and the Schouten bracket on polyvector fields respectively}. Recall, that the Schouten bracket is the unique bracket on the space of polyvector fields, that verifies the Leibniz rule with respect to wedge product and is given by the commutator on usual vector fields.

It is now easy to write down the conditions, that guarantee the associativity of the $\ast$-product in terms of the operations in Hochschild complex: first of all we interpret the bidifferential operators $B_k$ as elements in $C^2(C^\infty(M))$. To make our notation shorter we shall also put $B_1(f,g)=\frac12\{f,g\}=\chi(\pi)$ (here $\chi$ denotes the Hochschild-Kostant-Rosenberg antisymmetrization map, see above). Now by comparing the coefficients with the same power in $\hbar$ on both sides of \eqref{eq:assoc1} and using the definitions of Gerstenhaber bracket and its properties, listed above, one obtains the equations, that ensure associativity of the star-product. The first few equations are
\begin{equation}
\label{eq:MCeq1}
\begin{aligned}
\delta B_1&=0;\\
\delta B_2&=-\frac12([B_1,B_1])\\
\delta B_3&=-[B_1,B_2]=-\frac12([B_1,B_2]+[B_2,B_1]),
\end{aligned}
\end{equation}
and so on. If we consider the formal power series $B=\sum_{k=1}^\infty \hbar^kB_k$ as an element in $C^*(C^\infty(M))[[\hbar]]$ and extend all the operations in Hochschild complex to this module in an evident way ($\hbar$-linearly), then we can write all these equalities in a rather concise form:
\begin{equation}
\label{eq:MCeq2}
\delta B-\frac12[B,B]=0.
\end{equation}
This equation is usually called \textit{the Maurer-Cartan equation}, see section \ref{sec:Kon1}. The existence (and uniqueness up to an equivalence) of solution of the Maurer-Cartan equation \eqref{eq:MCeq2} with the first term $B_1$ given by a Poisson bivector (i.e. $B_1(f,g)=\frac12\{f,g\}=\chi(\pi)(f,g)$), is guaranteed by the well-known Kontsevich's formality theorem, see \cite{Kon97,Kon03}. In what follows we shall assume, that such a solution $B$ is fixed and denote by $B_k$ its coefficients. Some details on the proof of this theorem can be found in the next section.

\subsection{Kontsevich's map and deformation theory}
\label{sec:Kon1}
A Lie algebra structure on a graded space in which all identities hold with the signs, given by Koszul's sign convention, is called \textit{graded Lie algebra}; if in addition there is a degree $+1$ differential on this space, which verifies the Leibniz rule with respect to the bracket, then it is \textit{differential graded}, or just \textit{differential Lie algebra} (DGLA for short). Cohomology of such algebras inherits a Lie algebra structure. A homomorphism $f:\mathfrak g\to\mathfrak h$ of two differential Lie algebras is called \textit{quasi-isomorphism} if it induces an isomorphism in cohomology. Unlike usual isomorphisms, quasi-isomorphisms do not have inverse homomorphisms. Instead, if we want to find inverse of a quasi-isomorphism, we need to embed the map in a larger category, that of $L_\infty$-algebras and $L_\infty$-homomorphisms between them. Without going deep into details, let us say, that \textit{every differential Lie algebra is an $L_\infty$-algebra, and every homomorphism of Lie algebras is an $L_\infty$-morphism}. It is also possible to give an explicit definition of an $L_\infty$-morphisms between two Lie algebras, $\mathfrak g_1,\,\mathfrak g_2$ with differentials $d_1,d_2$ and brackets $[,]_1,\,[,]_2$. First, we observe that the differential $d_1$ can be extended to the exterior powers of $\mathfrak g_1$; we shall denote this extension by the same symbol $d_1$. Further, we recall, that for any homogeneous map $f:C\to D$ between two (co)chain complexes with differentials $d_1,\,d_2$, its differential is given by 
$$
d(f)=f\circ d_1-(-1)^{|f|}d_2\circ f,
$$
where $|f|$ denotes the homogeneity degree of $f$. Finally, one says, that an $L_\infty$-morphism $F$ from $\mathfrak g_1$ to $\mathfrak g_2$ is given, if there is a collection of maps $F_n:\Lambda^n\mathfrak g_1\to\mathfrak g_2$ of degrees $1-n$, which verify the following sequence of equations (here we omit the signs of $\wedge$-product):
$$
\begin{aligned}
dF_{n+1}&(X_1,X_2,\dots,X_{n+1})=\sum_{1\le i<j\le n+1}(-1)^{\epsilon(i,j)}F_n([X_i,X_j]_1,X_1,\dots,\widehat{X_i},\dots,\widehat{X_j},\dots,X_{n+1})\\
                                          &\quad+\frac12\sum_{i=1}^{n}\sum_{\sigma\in S_{n+1}}\frac{(-1)^{\sigma(X)}}{i!(n-i+1)!}[F_i(X_{\sigma(1)},\dots,X_{\sigma(i)}),F_{n-i+1}(X_{\sigma(i+1)},\dots,X_{\sigma(n+1)})]_2.
\end{aligned}
$$
Here as usually\ \ $\widehat{}$\ \ denotes the missing element, $S_{n+1}$ is the group of permutations in $n+1$ elements and the signs are obtained from Koszul sign rules and depend on the degrees of the elements $X_k$. In particular, this equality shows that $d(F_1)=0$, i.e. $F_1:\mathfrak g_1\to\mathfrak g_2$ is a chain map; further, although the map $F_1$ needs not be a homomorphism of Lie algebras, the second equation shows that $F_2$ is a homotopy, which makes the induced map $F^*_1$ on cohomology a homomorphism. Also observe that every homomorphism $f:\mathfrak g_1\to\mathfrak g_2$ of Lie algebras induces an $L_\infty$-morphism: just put $F_1=f$ and $F_k=0,\,k\ge2$.

One says, that an $L_\infty$-map $F$ is quasi-isomorphism, if $F_1$ induces an isomorphism in cohomology. One can show, that in this case there always exists a homotopy-inverse $L_\infty$-map $G:\mathfrak g_2\to\mathfrak g_1$; thus it is almost as good as an isomorphism of Lie algebras. In addition, almost like the usual homomorphism of Lie algebras, $L_\infty$-morphism $F=\{F_n\}:\mathfrak g_1\to\mathfrak g_2$ allows one transfer algebraic structures from left to right. The structure, that we care for most of all is the set of solutions of the Maurer-Cartan equation. Recall, that an element $\omega$ in a differential graded Lie algebra $\mathfrak g$ such that $\mathrm{deg}\,\omega=1$ is called \textit{a solution of MC equation}, if
$$
d\omega-\frac12[\omega,\omega]=0.
$$
As we have explained earlier, this equation is closely related to the deformation quantization of an algebra; the set of all solutionos of this equation in $\mathfrak g$ is often denoted by $MC(\mathfrak g)$

It is clear, that if $f:\mathfrak g_1\to\mathfrak g_2$ is a homomorphism of differential Lie algebras and $\omega_1\in MC(\mathfrak g_1)$ then $f(\omega_1)$ is in $MC(\mathfrak g_2)$. It turns out, that similar statement is true in case, when there is only an $L_\infty$-morphism $F$ between the algebras: just put
\begin{equation}
\label{eq:trMC1}
F(\omega)=\sum_{n\ge0}\frac{1}{n!}F_n(\underbrace{\omega_1,\dots,\omega_1}_{n\ \mbox{times}}).
\end{equation}
Of course, we must assume, that the sum on the right converges in one or another sense.

Another important construction, closely related with the previous one, is based on the following observation: every solution $\omega$ of the Maurer-Cartan equation in $\mathfrak g$ gives rise to a new differential in $\mathfrak g$: put
$$
d_\omega x=dx+[\omega,x].
$$
Then an easy computation shows that $d_\omega^2=0$:
$$
\begin{aligned}
d_\omega^2 x&=d_\omega(dx+[\omega,x])=d^2x+d[\omega,x]+[\omega,dx]+[\omega,[\omega,x]]\\
                    &=[d\omega,x]-[\omega,dx]+[\omega,dx]-\frac12[[\omega,\omega],x]\\
                    &=[d\omega-\frac12[\omega,\omega],x]=0.
\end{aligned}
$$
Here we have used the Leibniz rule, which holds for the differential $d$ with respect to the Lie brackets, and the Jacobi identity, which in this case reads as:
$$
[\omega,[\omega,x]]=[[\omega,\omega],x]-[\omega,[\omega,x]].
$$
In fact, this new differential commutes with the Lie algebra structure:
$$
\begin{aligned}
d_\omega[x,y]&=d[x,y]+[\omega,[x,y]]=[dx,y]+(-1)^{|x|}[x,dy]+[[\omega,x],y]+(-1)^{|x|}[x,[\omega,y]]\\
                       &=[d_\omega x,y]+(-1)^{|x|}[x,d_\omega y]
\end{aligned}
$$
so that $(\mathfrak g,d_\omega)$ is a differential Lie algebra again.

It turns out, that the $L_\infty$-map $F$ not only allows one transfer the solutions of Maurer-Cartan equation, but also it gives a map $(\mathfrak g_1,d_\omega)\to(\mathfrak g_2,d_{F(\omega)})$; namely put
\begin{equation}
\label{eq:defmap1}
F_\omega(\alpha)=\sum_{k\ge1}\frac{1}{(k-1)!}F_k(\alpha,\underbrace{\omega,\dots,\omega}_{k-1\ \mbox{times}})
\end{equation}
for any $\alpha\in\mathfrak g_1$ (once again we assume, that the sum on the right converges in some sense). Then it is easy to show that $F_\omega$ commutes with the differentials:
$$
F_\omega(d_\omega\alpha)=d_{F(\omega)}F_\omega(\alpha).
$$
Moreover, the map $F_\omega$ can be extended to an $L_\infty$-morphism $F_\omega:(\mathfrak g_1,d_\omega,[,]_1)\to(\mathfrak g_1,d_{F(\omega)},[,]_2)$; in case $F$ was an $L_\infty$-quasiisomorphism, the map $F_\omega$ is also a quasi-isomorphism.

The main result of Kontsevich's paper \cite{Kon97} can be interpreted in the terms of this general theory: consider the differential graded Lie algebras $\mathfrak g_1=\Lambda^*TM[[\hbar]]$ (with zero differential and $\hbar$-linear Schouten bracket) and $\mathfrak g_2=C^*(C^\infty(M))[[\hbar]]$ (with Hochschild differential and $\hbar$-linear Gerstenhaber bracket). Then (see \cite{Kon97}) \textit{there exists an $L_\infty$-quasi-isomorphism $F=\{F_n\}:\mathfrak g_1\to\mathfrak g_2$, such that $F_1=\chi$ is the Hochschild-Kostant-Rosenberg map.} 

Observe, that Poisson bivector $\hbar\pi$ verifies the Maurer-Cartan equation. On the other hand in order to define the $\ast$-product we need an element $B\in MC(\mathfrak g_2)$. Now one can do this by the virtue of formula \eqref{eq:trMC1}, since the convergence is guaranteed by the growing powers of $\hbar$.

\section{$L_\infty$-maps and quantization of Lie algebra actions}
In this section we address the question of how the action of Poisson vector fields on the manifold can be extended to an action of the same fields by derivations on the quantized algebra. It turns out, that there always exist a way to extend the action of just one field, while in case of a multiple independent fields the method we use fails; in fact it only gives an action of the Lie algebra by derivations up to internal differentiations. After this we develop a theory of obstructions, that govern the question. It follows from the previous observation, that in the case of $1$-dimensional Lie algebra, these obstructions can be made equal to $0$ (see however remark \ref{rem:partialcase}), but in a generic case their values are not clear.

\subsection{Quantization of a vector field}
\label{sec:Kq}
Let us begin by showing that any Poisson vector field on $M$ can be extended to a differentiation of the deformed algebra. To this end we shall use Kontsevich's quasi-isomorphism of the algebras of Hochschild cochains and polyvector fields, whose basic definitions and related results we briefly recalled in previous section.

First of all, consider a generic vector field $\xi$ on $M$; it is our purpose to find a deformation of $X$,
\begin{equation}
\label{eq:xi1}
\Xi=\xi+\hbar\xi_1+\hbar^2\xi_2+\dots,
\end{equation}
where $\xi_k$ are differential operators n $M$, so that it induces a differentiation on the deformed algebra $(C^\infty(M)[[\hbar]],\ast)$ (as one can see, we assume that the first term is equal to $\xi$, so that $\Xi=\xi+o(\hbar)$). Let us write down the conditions, which follow from the assumption, that $\Xi$ is a derivation of the noncommutative algebra $(C^\infty(M)[[\hbar]],\ast)$; we can do it by considering one by one the coefficients at different powers of $\hbar$.

First of all, in degree $0$ we have the equation
$$
\xi(ab)=a\xi(b)+\xi(a)b,
$$
i.e. the map $\xi$ should be a derivation of $C^\infty(M)$, which certainly holds, since $\xi$ is a vector field. Further, at degree $1$, we obtain the equality
$$
\xi_1(ab)-a\xi_1(b)-\xi(a)b=\frac12(\{\xi(a),b\}+\{a,\xi(b)\}-\xi(\{a,b\})).
$$
Now this equality can only hold, when entities on both sides vanish: indeed, since $ab=ba$, the left hand side is symmetric in $a$ and $b$, while on the left we have an antisymmetric expression. This means, that the condition that $\xi$ is a Poisson vector field cannot be removed. It turns out, that this condition is sufficient.
\begin{prop}
For any Poisson vector field $\xi$ there exists a continuation $\Xi$ of the form \eqref{eq:xi1}, which is a differentiation of the deformed algebra.
\end{prop}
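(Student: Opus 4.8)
The plan is to realise the sought derivation as the image of $\xi$ under Kontsevich's twisted morphism, exploiting the fact that both ``$\xi$ is a Poisson field'' and ``$\Xi$ is a derivation of $\ast$'' are closedness conditions for twisted differentials that $F$ intertwines. I take the Maurer--Cartan element $\omega=\hbar\pi\in MC(\mathfrak g_1)$. As recalled in Section~\ref{sec:Kon1}, the element governing the $\ast$-product is exactly $B=F(\hbar\pi)=\sum_{n\ge0}\frac1{n!}F_n(\hbar\pi,\dots,\hbar\pi)$, cf.~\eqref{eq:trMC1}, and the twisted morphism $F_{\hbar\pi}$ of \eqref{eq:defmap1} is a chain map $(\mathfrak g_1,d_{\hbar\pi})\to(\mathfrak g_2,d_B)$. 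I would then simply set $\Xi=F_{\hbar\pi}(\xi)$ and verify its two required properties.

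First I would record the two dictionaries. In $\mathfrak g_1$ the differential is zero, so $d_{\hbar\pi}\xi=[\hbar\pi,\xi]$, which vanishes precisely when $L_\xi\pi=0$; thus a vector field is $d_{\hbar\pi}$-closed iff it is Poisson. On the quantum side, a degree-$0$ element $D$ of $\mathfrak g_2$ (a single differential operator) is $d_B$-closed iff it bracket-commutes with the full deformed product $\mu_\ast=\mu+B$, and unwinding this into the Leibniz identity for $\ast$ shows it is exactly the condition that $D$ be a derivation of $(C^\infty(M)[[\hbar]],\ast)$. A degree count confirms the construction stays in the right place: since $F_k$ has degree $1-k$ and $\hbar\pi$ has degree $1$, each term $F_k(\xi,\hbar\pi,\dots,\hbar\pi)$ of \eqref{eq:defmap1} lands in degree $0$, so $\Xi$ is again a single operator.

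The conclusion is then formal. Because $\xi$ is Poisson we have $d_{\hbar\pi}\xi=0$, and since $F_{\hbar\pi}$ commutes with the differentials,
$$
d_B\Xi=d_B F_{\hbar\pi}(\xi)=F_{\hbar\pi}(d_{\hbar\pi}\xi)=0,
$$
so $\Xi$ is a derivation of $\ast$ by the dictionary above. It remains to identify the leading term: the $k=1$ summand of \eqref{eq:defmap1} is $F_1(\xi)=\chi(\xi)$, and the Hochschild--Kostant--Rosenberg map sends a vector field to itself as a first-order operator, $\chi(\xi)=\xi$; every $k\ge2$ summand carries at least one factor $\hbar\pi$ and is therefore $O(\hbar)$. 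Hence $\Xi=\xi+\hbar\xi_1+\hbar^2\xi_2+\cdots$ is of the form \eqref{eq:xi1}, and convergence of the defining series is automatic since only finitely many terms contribute to each power of $\hbar$.

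The heavy input--existence of the $L_\infty$-quasi-isomorphism $F$ and the fact that $F_\omega$ intertwines the twisted differentials--is supplied by Kontsevich's formality and the general twisting construction recalled above, so once these are granted the argument is essentially a transfer of a single cocycle. I expect the only genuinely delicate point to be the sign and normalisation bookkeeping in the second dictionary, namely pinning down, under the conventions of Section~\ref{sec:Kon1}, that $d_B$-closed degree-$0$ cochains are precisely the $\ast$-derivations. The conceptual heart is that the Poisson condition $L_\xi\pi=0$ is exactly what makes $\xi$ a cocycle to begin with: without it $d_{\hbar\pi}\xi\neq0$ and there is nothing to transfer, in agreement with the necessity of the Poisson condition established just before the proposition.
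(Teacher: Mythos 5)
Your proposal is correct and follows essentially the same route as the paper's own proof: both identify $\ast$-derivations with cocycles for the $B$-twisted Hochschild differential, identify Poisson fields with $d_{\hbar\pi}$-cocycles, and obtain $\Xi$ as the image of $\xi$ under the twisted Kontsevich morphism of \eqref{eq:defmap1}, i.e. $\Xi=\sum_{n\ge1}\frac{\hbar^{n-1}}{(n-1)!}\,F_n(\xi,\pi,\dots,\pi)$. Your write-up merely makes explicit some details the paper leaves implicit (the two ``dictionaries,'' the degree count, and the identification of the leading term via $F_1=\chi$).
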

\begin{proof}
Using the notation from section \ref{sec:Hhdef}, we can write all the relations on $\xi_k$ in the following brief form:
$$
\delta\Xi-[B,\Xi]=0.
$$
In other words, $\Xi$ is a differentiation of $(C^\infty(M)[[\hbar]],\ast)$, if and only if it is closed with respect to the $B$-deformed differential in the Hochschild complex, see section \ref{sec:Kon1}. On the other hand, since $\xi$ is Poisson vector field, it represents a closed element in the $\pi$-deformed differential in polyvector fields (where the original differential is equal to $0$). Now the claim follows directly from the observation in the end of section \ref{sec:Kon1}, see formula \eqref{eq:defmap1}. In this case the corresponding formula takes the form
$$
\Xi=\sum_{n\ge1}\frac{\hbar^{n-1}}{(n-1)!}\,\uc_n(\xi,\pi,\dots,\pi),
$$
where $\uc_n$ is the $n$-th stage of Kontsevich's $A_\infty$-quasi-isomorphism.
\end{proof}
It is instructive to look at the map $\uc_\pi$ (formula \eqref{eq:defmap1}), induced by Kontsevich's quasi-isomorphism when the dimension of \g\ is greater than $1$ (of course we assume that \g\ acts on $M$ by Poisson fields). In this case every element $X\in\g$ induces a differentiation on the deformed algebra, however this map is not a homomorphism of Lie algebras. Indeed, from the definition of $A_\infty$-maps we obtain the following equality, since $[X,\pi]=[Y,\pi]=[\pi,\pi]=0$ in the algebra of polyvector fields
$$
\begin{aligned}
\delta(\uc_n)(X,Y,\pi,\dots,\pi)&=\uc_{n-1}([X,\,Y],\pi,\dots,\pi)\\
                  &\quad -\sum_{p+q=n}\Bigl(\frac{(n-2)!}{(p-1)!(q-1)!}[\uc_p(X,\pi,\dots,\pi),\uc_q(Y,\pi,\dots,\pi)]\\
                   &\qquad\qquad-\frac{(n-2)!}{(p-2)!q!}[\uc_p(X,Y,\pi,\dots,\pi),\uc_q(\pi,\dots,\pi)]\Bigr).
\end{aligned}
$$
Also observe, that due to the dimensional restrictions $\uc_n(X,Y,\pi,\dots,\pi)\in C^0(C^\infty(M))$, so as $C^\infty(M)$ is commutative, $\delta(\uc_n)(X,Y,\pi,\dots,\pi)=0$. Thus we have the following equation (here we use the fact, that on elements of degree $1$, Gerstenhaber brackets coincide with usual commutators)
$$
\begin{aligned}
{}[\uc_\pi(X),\uc_\pi(Y)]&=\sum_{p,q=1}^\infty\frac{\hbar^{p+q-2}}{(p-1)!(q-1)!}[\uc_p(X,\pi,\dots,\pi),\uc_q(Y,\pi,\dots,\pi)]\\
                             &=\sum_{n=1}^\infty\frac{\hbar^{n-1}}{(n-1)!}\,\uc_n([X,Y],\pi,\dots,\pi)\\
                             &\quad+\sum_{p,q}^\infty[\frac{\hbar^{p-2}}{(p-2)!}\,\uc_p(X,Y,\pi,\dots,\pi),\frac{\hbar^q}{q!}\,\uc_q(\pi,\dots,\pi)].
\end{aligned}
$$
Put $\Phi(X,Y)=\sum_{k=2}^\infty\frac{\hbar^{k-2}}{(k-2)!}\,\uc_k(X,Y,\pi,\dots,\pi)$, then $\Phi(X,Y)\in C^\infty(M)[[\hbar]]$; now we can rewrite the last equality as follows
$$
[\uc_\pi(X),\uc_\pi(Y)]=\uc_\pi([X,Y])+[\Phi(X,Y),B],
$$
or
\begin{equation}
\label{eq:diffcomm}
[\uc_\pi(X),\uc_\pi(Y)]-\uc_\pi([X,Y])=ad_{\Phi(X,Y)},
\end{equation}
where $ad_f,\ f\in C^\infty(M)[[\hbar]]$ is the inner derivative of the deformed algebra $(C^\infty(M)[[\hbar]],\ast)$ with respect to $f$. With a little work we obtain the following easy proposition
\begin{prop}
$\Phi$ determines a class in the $2$-dimensional Lie algebra cohomology of \g\ with values in $C^\infty(M)[[\hbar]]//[C^\infty(M)[[\hbar]],C^\infty(M)[[\hbar]]]$ (here on the right we consider the factor space of $C^\infty(M)[[\hbar]]$ by the subspace of all commutators of its elements with respect to the $\ast$-product). If one can homotopy the $A_\infty$-morphism $\uc$ to $\uc'$ so that $\Phi(X,Y)=0$ for all $X,Y\in\g$, then this class vanishes.
\end{prop}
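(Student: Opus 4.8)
The plan is to regard $\Phi$ as the curvature of the ``almost-representation'' $X\mapsto\uc_\pi(X)$ and to read off the cocycle condition from the $L_\infty$-relations for Kontsevich's $\uc$, not from the bare Jacobi identity for derivations. Before that one must fix the module. Write $A=C^\infty(M)[[\hbar]]$ and let $V=A\,//\,[A,A]$ be its quotient by the subspace of $\ast$-commutators. Each $\uc_\pi(X)$ is a derivation of $(A,\ast)$, hence preserves $[A,A]$ and descends to an operator $\bar D_X$ on $V$; since every inner derivation maps $A$ into $[A,A]$, formula \eqref{eq:diffcomm} shows that $ad_{\Phi(X,Y)}$ acts as $0$ on $V$, so that $[\bar D_X,\bar D_Y]=\bar D_{[X,Y]}$. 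Thus $X\mapsto\bar D_X$ is an honest representation of $\g$ on $V$, and the image $\bar\Phi\colon\Lambda^2\g\to V$ of $\Phi$ is a genuine $2$-cochain of $\g$ with values in $V$.

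I would note first why the naive route is not enough: applying the operator Jacobi identity to $\uc_\pi(X),\uc_\pi(Y),\uc_\pi(Z)$, substituting \eqref{eq:diffcomm} together with $[ad_f,D]=-ad_{Df}$ and the Jacobi identity in $\g$, only yields $ad_{(d\Phi)(X,Y,Z)}=0$, i.e. that $(d\Phi)(X,Y,Z)$ is central; centrality does not by itself imply triviality in $V$. The correct and stronger statement comes from the defining $L_\infty$-equation for $\uc$ evaluated on the argument $(X,Y,Z,\pi,\dots,\pi)$, with three Poisson fields and arbitrarily many copies of $\pi$. The decisive point is a degree count: in the shifted grading each vector field has degree $0$, each $\pi$ degree $1$, and $\uc_n$ degree $1-n$, so $\uc_n(X,Y,Z,\pi,\dots,\pi)$ has Hochschild degree $-1$, i.e. lies in $C^{-1}=0$; hence the left-hand side $\delta\,\uc_n(X,Y,Z,\pi,\dots,\pi)$ vanishes identically. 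On the right-hand side the Schouten brackets $[X,\pi]=[\pi,\pi]=0$ kill all mixed linear terms; the inner term $[\,\cdot\,,B]$ that produced the correction in the two-field relation \eqref{eq:diffcomm} cannot occur now, since it would require all three fields in a single slot, i.e. a factor in $C^{-1}=0$; and the surviving terms, after collecting the powers of $\hbar$ and the factorials built into $\Phi$ and $\uc_\pi$, reassemble into the Chevalley--Eilenberg differential
\begin{align*}
(d\Phi)(X,Y,Z)&=\uc_\pi(X)\Phi(Y,Z)-\uc_\pi(Y)\Phi(X,Z)+\uc_\pi(Z)\Phi(X,Y)\\
&\quad-\Phi([X,Y],Z)+\Phi([X,Z],Y)-\Phi([Y,Z],X).
\end{align*}
Hence $d\Phi=0$ already in $A$, and a fortiori $d\bar\Phi=0$, so $[\bar\Phi]\in H^2(\g,V)$ is well defined.

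For the vanishing assertion I would invoke homotopy invariance of this class, which is exactly what forces the coefficients to lie in $V$ rather than in $A$. A homotopy $\uc\rightsquigarrow\uc'$ of $A_\infty$-morphisms is given by maps $\hc_n$ satisfying the standard homotopy relations; evaluated on $(X,\pi,\dots,\pi)$ these produce a $0$-cochain $h(X)\in A$ whose only effect is to alter each $\uc_\pi(X)$ by the inner derivation $ad_{h(X)}$, so the representation on $V$ is unchanged, while evaluated on $(X,Y,\pi,\dots,\pi)$ they express $\Phi'-\Phi$ as a Chevalley--Eilenberg coboundary up to $\ast$-commutators. Reducing modulo $[A,A]$ annihilates precisely these inner corrections, giving $\bar\Phi'-\bar\Phi=d\bar h$ and hence $[\bar\Phi']=[\bar\Phi]$ in $H^2(\g,V)$; if the homotopy is chosen so that $\Phi'\equiv0$, then $\bar\Phi'=0$ and the class vanishes. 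I expect the main obstacle to be purely the bookkeeping of Koszul signs and combinatorial weights needed to recognize the reorganized $L_\infty$- and homotopy-relations as the Chevalley--Eilenberg differential and coboundary; the one conceptual subtlety — that passage to $V$ is what turns the unavoidable inner-derivation corrections into honest coboundaries — is already isolated above.
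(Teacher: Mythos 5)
Your proposal follows essentially the same route as the paper's own sketch: you verify that the action descends to $V=A//[A,A]$ via \eqref{eq:diffcomm}, prove $d\Phi=0$ already in $A$ from the $L_\infty$-relation evaluated on $(X,Y,Z,\pi,\dots,\pi)$ together with the degree count forcing $\uc_n(X,Y,Z,\pi,\dots,\pi)=0$, and obtain homotopy invariance from the cochain $h(X)=\sum_n\frac{\hbar^n}{n!}\hc_{n+1}(X,\pi,\dots,\pi)$, exactly as in the paper. The only point the paper makes that you gloss over is that a homotopy also changes the $\ast$-product itself, so the two factorspaces must first be identified via the gauge equivalence $\sum_n\frac{\hbar^n}{n!}\hc_n(\pi,\dots,\pi)$; this is a minor bookkeeping step, not a gap in the argument.
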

\begin{rem}\rm
Observe, that this statement gives neither a sufficient, nor a necessary condition for the existence of the Lie algebra representation, verifying the conditions we impose: note that in order to have the desired result, we need only to know that $\mathrm{Im}\,\Phi\in Z(\ac)$, and not that $\Phi=0$. It seems, that more suitable conditions can be found, if we consider more complicated complexes, for example the Chevalley complex with coefficients in Hochschild complex of $(C^\infty(M)[[\hbar]],\ast)$, see next sections for a similar construction.
\end{rem}
\begin{proof} We shall only sketch the proof here. First we check that $d\Phi(X,Y)=0$, where $d$ denotes the Chevalley-Eilenberg differential in the complex with values in $\mathcal C=C^\infty(M)[[\hbar]]/[C^\infty(M)[[\hbar]],C^\infty(M)[[\hbar]]]$. To this end using the definition of $L_\infty$-morphism, similarly to what we did before we compute:
$$
\begin{aligned}
d_{CE}\Phi(X,Y,Z)&=\Phi([X,Y],Z)-\Phi([X,Z],Y)+\Phi([Y,Z],X)\\
                             &\quad-[\uc_\pi(X),\Phi(Y,Z)]+[\uc_\pi(Y),\Phi(X,Z)]-[\uc_\pi(Z),\Phi(X,Y)]=0,
\end{aligned}
$$
where one should use the fact that $\uc_k(X,Y,Z,\pi,\dots,\pi)=0$ due to the dimension restrictions. Observe, that this equality holds in $C^\infty(M)[[\hbar]]$ without passing to the factorspace. On the other hand, since $\uc_\pi(X)$ is a differentiation of $C^\infty(M)[[\hbar]]$, when $X$ is a Poisson field, the space of commutators is preserved by its action; moreover, the equation \eqref{eq:diffcomm} shows, that the action of $\uc_\pi(X)$ descends to an action of \g\ on the factorspace. Thus, the Lie algebra cohomology of \g\ with values in $\mathcal C$  is well defined.

Further, if $\uc$ is homotopic to $\uc'$, then the corresponding $\ast$-products are equivalent, the equivalence being given by the operator $\sum_n\frac{\hbar^n}{n!}\hc_n(\pi,\dots,\pi)$; thus one can identify the corresponding factorspaces. It also follows that the difference of $\uc_\pi(X)$ and $\uc'_\pi(X)$ is equal to the inner derivative with respect to the element $\sum_n\frac{\hbar^n}{n!}\hc_{n+1}(X,\pi,\dots,\pi)$. Thus, the corresponding Chevalley-Eilenberg complexes are equal. Finally, in this case the difference $\Phi(X,Y)-\Phi'(X,Y)$ will be equal to the differential of $\sum_n\frac{\hbar^n}{n!}\hc_{n+1}(X,\pi,\dots,\pi)$, whence the result.
\end{proof}
\begin{rem}\rm
If we take composition of $\Phi$ with any \g-equivariant linear functional on the space $\mathcal C$, we shall obtain a cohomology class with values in $\mathbb R$. A good example of such functional is given by Fedosov's trace (see \cite{Fedosov}), which can be applied to the algebra \ac\ for symplectic manifolds.

Another remark, which we would like to make here is that the space $\mathcal C$ is the $0$-degree part of the Hochschild homology of $(C^\infty(M)[[\hbar]],\ast)$; thus it seems, that this construction should be treated as a part of more general theory which would involve the Hochschild and cyclic cohomology.
\end{rem}
In order to understand the more general situation (when $B$ is not induced by an $A_\infty$-map), and to find finer obstructions for the solution of our problem, we need more accurate considerations, which are given in the following two sections.

\section{The obstruction theory}
\subsection{1d-case}
We begin with the problem of extending a Poisson vector field to a derivative of the deformed algebra. So let $X=X_0$ be a Poisson vector field on $M,\pi$; this is equivalent to the condition that $X$ verifies Leibniz rule with respect to the Poisson bracket $\{,\}$, i.e. 
\begin{equation}
\label{eq:Poivf1}
X(\{f,g\})=\{X(f),g\}+\{f,X(g)\}.
\end{equation}
We are looking for a formal power series operator
$$
\xc= \sum_{k=0}^\infty \hbar^k X_k,
$$
where $X_k: C^\infty(M)\to C^\infty(M),\ k\ge 1$ are some differential operators (and $X_0=X$); it is our purpose to find the series $\xc$ such, that
\begin{equation}
\label{eqsymm}
\xc(f\ast g)=\xc(f)\ast g+ f\ast\xc(g).
\end{equation}
Using the decomposition \eqref{eq:defq1} we can rearrange this graded relation in the form of a series of equations, beginning with:
\begin{align}
\notag
&\qquad\qquad\qquad X_0(fg)-fX_0(g)-X_0(f)g=0,\\
\intertext{which holds, since $X_0$ is a vector field;}
\notag
&\qquad X_1(fg)-fX_1(g)-X_1(f)g=\frac12(\{X_0(f),g\}+\{f,X_0(g)\}-X_0(\{f,g\})),\\
\intertext{which can be easily fulfilled: recall, that $X$ is a differentiation of Poisson bracket (see \eqref{eq:Poivf1}) so the right hand side vanishes; now it is enough to take an arbitrary vector field as $X_1$. Next:}
\label{eqsymdeg2}
&\begin{aligned}
X_2(fg)-fX_2(g)-X_2(f)g&=\frac12(\{X_1(f),g\}+\{f,X_1(g)\}-X_1(\{f,g\}))\\
                                               &\quad+B_2(X_0(f),g)+B_2(f,X_0(g))-X_0(B_2(f,g))
\end{aligned}
\end{align}
The left hand side of this equality is equal to the opposite of Hochschild differential of $X_2$. On the other hand, the expression on the right of this formula can be interpreted as the sum of two Gerstenhaber brackets:
$$
\begin{aligned}
\frac12(\{X_1(f),g\}+\{f,X_1(g)\}-X_1(\{f,g\}))&=[B_1,X_1](f,g),\\
B_2(X_0(f),g)+B_2(f,X_0(g))-X_0(B_2(f,g))&=[B_2,X_0](f,g).
\end{aligned}
$$
So, if we apply Hochschild differental to the right hand side of this formula, we shall obtain:
$$
\begin{aligned}
\delta([B_1,X_1]+[B_2,X_0])&=[\delta(B_1),X_1]-[B_1,\delta(X_1)]\\
                                                                &\quad+[\delta(B_2),X_0]-[B_2,\delta(X_0)]\\
                                                                &=[\delta(B_2),X_0],
\end{aligned}
$$
since $X_0,\,X_1$ and $B_1$ are Hochschild cocycles (the latter follows from the Leibniz rule for a Poisson bracket; c.f. also the first equality in \eqref{eq:MCeq1}). On the other hand, since $\ast$ is an associative product, we have from \eqref{eq:MCeq1}
$$
\delta(B_2)=-\frac12[B_1,B_1].
$$ 
Since $X_0$ is a symmetry of $B_1$ (the latter being given by Poisson bivector), it follows from Jacobi identity that $[[B_1,B_1],X_0]=0$; so the right hand side of the last equation vanishes, and we conclude, that the right hand side is a Hochschild cocycle. 

Recall, that for a Poisson manifld $M,\pi$ its \textit{Lichnerowicz-Poisson} cohomology is defined as the cohomology of the complex $\Gamma(\Lambda^*TM)$ with differential $d_\pi$. Using the identifications of previous section, we can say, that $d_\pi$ is equal to the map in Hochschild cohomology, induced by the Gerstenhaber bracket with $\chi(\pi)$. If we apply this map to the Hochschild cocycle $[B_2,X_0]$, we obtain from Jacobi identity, invariance of $\pi$ with respect to $X=X_0$ and the Maurer-Cartan equation:
$$
[B_1,[B_2,X_0]]=[[B_1,B_2],X_0]=-[\delta B_3,X_0]=\delta(-[B_3,X_0]).
$$
The last equality follows from the fact, that $X_0$ is closed $1$-cochain. Thus the following is true:
\begin{prop}
\label{propdeg1}
One can find the derivative $\xc$ up to the second degree in $\hbar$ iff the class of $[B_2,X_0]\in H^2(C^\infty(M))$ belongs to the image of 
$$
d_\pi:H^1(C^\infty(M))\to H^2(C^\infty(M)),\ \mbox{where}\ d_\pi(Y)=[\pi,Y],
$$
for a vector field $X\in H^1(C^\infty(M))$. Here $\pi$ is the bivector, which defines the Poisson structure and the brackets on the right denote the Schouten brackets on polyvector fields. In other words it vanishes, iff the class of $[B_2,X_0]$ in the Lichnerowicz-Poisson cohomology of $M$ vanishes.
\end{prop}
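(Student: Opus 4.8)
The plan is to read the degree-two equation \eqref{eqsymdeg2} as a single cohomological equation in the Hochschild complex and to decide when it admits a solution. As already noted, the left-hand side is $-\delta X_2$ and the right-hand side is the $2$-cochain $[B_1,X_1]+[B_2,X_0]$, so finding $X_2$ amounts to solving
$$
\delta X_2=-\bigl([B_1,X_1]+[B_2,X_0]\bigr).
$$
I would first record that the right-hand side is a Hochschild $2$-cocycle. For $[B_2,X_0]$ this is exactly the computation carried out just above the statement (using $\delta B_2=-\frac12[B_1,B_1]$ and $[[B_1,B_1],X_0]=0$), and for $[B_1,X_1]$ it is immediate since $\delta B_1=0$ and $\delta X_1=0$. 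Consequently $[B_1,X_1]+[B_2,X_0]$ represents a class in $H^2(C^\infty(M))$, and the equation is solvable for $X_2$ if and only if this class vanishes.

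The second step is to compute that class while keeping track of the remaining freedom. Here $X_1$ is still at our disposal: the degree-one equation only forced its symmetric defect to vanish, so $X_1$ may be taken to be an arbitrary vector field, i.e. an arbitrary element of $H^1(C^\infty(M))=\Gamma(TM)$. Using the local Hochschild-Kostant-Rosenberg identification $H^2(C^\infty(M))=\Gamma(\Lambda^2 TM)$ together with the compatibility of the Gerstenhaber bracket with the Schouten bracket in cohomology, the class of $[B_1,X_1]$ equals $[\pi,X_1]=d_\pi(X_1)$, since $B_1=\chi(\pi)$. As $X_1$ runs over all vector fields, the class of $[B_1,X_1]$ therefore runs over the whole subspace $\mathrm{Im}\bigl(d_\pi\colon H^1\to H^2\bigr)$. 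Hence the vanishing of the class of $[B_1,X_1]+[B_2,X_0]$ for some admissible $X_1$ is equivalent to the class of $[B_2,X_0]$ lying in $\mathrm{Im}\,d_\pi$.

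It remains to phrase this in terms of Lichnerowicz-Poisson cohomology. The computation $[B_1,[B_2,X_0]]=[[B_1,B_2],X_0]=-[\delta B_3,X_0]=\delta(-[B_3,X_0])$ shows that $[B_2,X_0]$ is $d_\pi$-closed, so it does define a class in the Lichnerowicz-Poisson cohomology $H^2_\pi$; membership in $\mathrm{Im}\,d_\pi$ is then precisely $d_\pi$-exactness, i.e. the triviality of this Lichnerowicz-Poisson class. Chaining the equivalences yields the proposition. The step I expect to require the most care is the middle one: making rigorous that the two independent freedoms --- the choice of the vector field $X_1$ and the Hochschild-coboundary ambiguity absorbed into $X_2$ --- combine to exactly the image of $d_\pi$, and in particular that passing from the cochain-level Gerstenhaber bracket $[B_1,X_1]$ to its cohomological value $d_\pi(X_1)$ (valid only up to a coboundary, via HKR) neither loses nor creates solvability. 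Everything else is bookkeeping of the cocycle and coboundary conditions already set up in the preceding paragraphs.
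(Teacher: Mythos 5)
Your proof is correct and follows essentially the same route as the paper: read \eqref{eqsymdeg2} as $\delta X_2=-([B_1,X_1]+[B_2,X_0])$, verify the right-hand side is a Hochschild cocycle using $\delta B_2=-\frac12[B_1,B_1]$ and $[[B_1,B_1],X_0]=0$, identify the class of $[B_1,X_1]$ with $d_\pi(X_1)$ via Hochschild--Kostant--Rosenberg, and use $[B_1,[B_2,X_0]]=\delta(-[B_3,X_0])$ to see that $[B_2,X_0]$ defines a Lichnerowicz--Poisson class whose vanishing is the solvability criterion. Your treatment is in fact slightly more explicit than the paper's about how the freedom in choosing the vector field $X_1$ sweeps out exactly $\mathrm{Im}\,d_\pi$, which is the heart of the ``iff''.
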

Now we are going to proceed by induction in the powers of $\hbar$. To make the pattern clear we begin with the next degree: we suppose that $X_0,\,X_1$ and $X_2$ have been chosen so that equation \eqref{eqsymm} holds up to the second degree in $\hbar$. Thus, the first non-zero term, that we should consider is:
\begin{equation}
\label{eqstage3}
X_3(fg)-fX_3(g)-X_3(f)g=[B_1,X_2](f,g)+[B_2,X_1](f,g)+[B_3,X_0](f,g).
\end{equation}
First, we show, that the right hand side of this equation is a Hochschild cocycle. Recall, that by induction hypothesis we have
$$
\begin{aligned}
\delta X_0&=0, & \delta X_1&=0, & \delta X_2&=-[B_1,X_1]-[B_2,X_0],
\end{aligned}
$$
and that
$$
\begin{aligned}
\delta B_1&=0, & \delta B_2&=-\frac12[B_1,B_1], & \delta B_3&=-[B_2,B_1]
\end{aligned}
$$
by Maurer-Cartan equation. So we have, using this and the graded skew symmetry of Gerstenhaber bracket 
$$
\begin{aligned}
\delta([B_1,X_2]+[B_2,X_1]+[B_3,X_0])&=[B_1,[B_1,X_1]]+[B_1,[B_2,X_0]]\\
                                                                         &\quad+\frac12[X_1,[B_1,B_1]]+[X_0,[B_2,B_1]]
\end{aligned}
$$
We claim that the right hand side of this formula vanishes. Using the Jacobi identity for Gerstenhaber bracket and its skew symmetry we have:
$$
\begin{aligned}
 \frac12[X_1,[B_1,B_1]]&+[B_1[B_1,X_1]]=\frac12([X_1,[B_1,B_1]]+2[B_1,[B_1,X_1]])\\
                                                                    &=\frac12([B_1,[B_1,X_1]]-[B_1,[X_1,B_1]+[X_1,[B_1,B_1]])=0
\end{aligned}
$$
Similarly,with the help of Jacobi identity and the fact, that $X_0$ is a symmetry of the Poisson bracket, we have
$$
[B_1,[B_2,X_0]]+[X_0,[B_2,B_1]]\!=\![B_1,[B_2,X_0]]-[B_2,[X_0,B_1]]+[X_0,[B_2,B_1]]=0.
$$
So the claim is true. Further, one can show, that Gerstenhaber bracket of this element with the Poisson bivector is exact with respect to the Hochschild boundary:
$$
\begin{aligned}
{[}B_1,[B_1,X_2]&+[B_2,X_1]+[B_3,X_0]]\\
                          &=[B_1,[B_1,X_2]]+[[B_1,B_2],X_1]-[B_2,[B_1,X_1]]+[[B_1,B_3],X_0]\\
                          &=\frac12[[B_1,B_1],X_2]-\frac12[\delta B_3,X_1]+[B_2,\delta X_2]\\
                          &\quad+[B_2,[B_2,X_0]]-[\delta B_4,X_0]+\frac12[[B_2,B_2],X_0]\\
                          &=-\left([\delta B_2,X_2]-[B_2,\delta X_2]\right)+\frac12\left([\delta B_3,X_1]+[\delta B_4,X_0]\right)\\
                          &\quad+\frac12[[B_2,B_2],X_0]+[B_2,[B_2,X_0]]\\
                          &=\delta([B_2,X_2]+[B_3,X_1]+[B_4,X_0]).
\end{aligned}
$$
Observe, that we can perturb the last chosen element $X_2$ by any vector field $X'$ without spoiling its cohomological properties: this will not change its Hochschild coboundary, so the previous equation \eqref{eqsymdeg2} will not be violated. On the other hand, the element on the right hand side of the equation \eqref{eqstage3} will be perturbed by a Poisson-exact element $d_\pi X$. Thus, we conclude, that the statement of the theorem remains intact: \textit{the existence of $X_3$ depends on the triviality of the class of $[B_1,X_2]+[B_2,X_1]+[B_3,X_0]$ in Poisson cohomology.}

Now the general construction is clear: we begin by supposing that the terms $X_0,\,X_1,\dots,X_n$ have been chosen so, that the equality \eqref{eqsymm} holds up to degree $n$ in $\hbar$. Then the following stage is given by an operator $X_{n+1}$, verifying the equality:
\begin{equation}
\label{eqstagen}
\delta X_{n+1}=-\sum_{k=1}^{n+1}[B_k,X_{n+1-k}].
\end{equation}
Then by inductive hypothesis we have the following properties of $X_k$:
$$
\delta X_k=-\sum_{j=1}^k[B_j,X_{k-j}],
$$
and, since the multiplication is associative
$$
\delta B_k=-\frac12\sum_{i=1}^{k-1}[B_i,B_{k-i}].
$$
Using these two equations, we see that the right hand side of equation \eqref{eqstagen} is a cocycle:
$$
\begin{aligned}
\delta&\left(\sum_{k=1}^{n+1}[B_k, X_{n+1-k}]\right)=\sum_{k=1}^{n+1}\left([\delta B_k,X_{n+1-k}]-[B_k,\delta X_{n+1-k}]\right)\\
          &=-\sum_{k=1}^{n+1}\left(\frac12[\sum_{i=1}^{k-1}[B_i,B_{k-i}],X_{n+1-k}]-[B_k,\sum_{j=1}^{n+1-k}[B_j,X_{n+1-k-j}]]\right)\\
          &=\frac12\sum_{p+q+r=n+1}([X_r,[B_p,B_q]]+2[B_p,[B_q,X_r])=0,
\end{aligned}
$$
where the last equality follows from Jacobi identity. Further, just like in the case of $X_2$ we can reduce the question of finding the extensions $X_{n+1},\ n\ge 2$ to the same form as for $X_2$ and $X_3$. Namely, observe, that adding a vector field $X'$ to $X_n$ does not change the relation, which determines it (since $\delta X'=0$. On the other hand, this perturbation turns the right hand side of equation \eqref{eqstagen} into
$$
[B_1,X']+\sum_{k=1}^{n+1}[B_k,X_{n+1-k}].
$$
Both terms, as we know, are closed Hochschild cochains, and the first one (after passing to cohomology) has the form $d_\pi(X')$, where $d_\pi$ is Lichnerowicz's Poisson cohomology differential. Thus, we conclude:
\begin{prop}
One can find a continuation $X_{n+1}$ of the deformed symmetry, if and only if the right hand side of equality \eqref{eqstagen} gives a trivial element in Lichnerowicz's Poisson cohomology.
\end{prop}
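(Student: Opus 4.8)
The plan is to read \eqref{eqstagen} as the inhomogeneous equation $\delta X_{n+1}=-R_{n+1}$ for the unknown $1$-cochain $X_{n+1}$, where I abbreviate the right hand side by $R_{n+1}=\sum_{k=1}^{n+1}[B_k,X_{n+1-k}]$. As an equation for a differential operator $X_{n+1}$ it is solvable exactly when $R_{n+1}$ is a Hochschild coboundary; since we have already verified that $\delta R_{n+1}=0$, the only obstruction is the class $[R_{n+1}]\in H^2(C^\infty(M))$, which by the Hochschild-Kostant-Rosenberg isomorphism we view as a bivector field. Two further facts are needed to turn this into the Poisson statement of the proposition: that $[R_{n+1}]$ is $d_\pi$-closed, so that it genuinely represents a class in Lichnerowicz-Poisson cohomology, and that the remaining freedom in the inductive construction moves this class precisely through $d_\pi$-coboundaries.

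The main work, and the step I expect to be the real obstacle, is the first of these: showing $[R_{n+1}]$ is $d_\pi$-closed. Since the Poisson differential on Hochschild cohomology is induced by $[B_1,\,\cdot\,]$, it is enough to prove that $[B_1,R_{n+1}]$ is $\delta$-exact. I would do this by generalising the explicit computation already carried out for $X_3$ around \eqref{eqstage3}: expand $[B_1,R_{n+1}]=\sum_k[B_1,[B_k,X_{n+1-k}]]$, use the graded Jacobi identity to commute $B_1$ past the inner brackets, and substitute the Maurer-Cartan relations $\delta B_k=-\tfrac12\sum_{i=1}^{k-1}[B_i,B_{k-i}]$ and the inductive relations $\delta X_k=-\sum_{j=1}^{k}[B_j,X_{k-j}]$. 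After regrouping, all purely bracketed terms (those carrying no $\delta$) must cancel by Jacobi, while the rest should collect into $\delta$ of an explicit primitive; the pattern of the $X_3$ case suggests this primitive is $\sum_{k=1}^{n+1}[B_{k+1},X_{n+1-k}]$. The difficulty is entirely in the bookkeeping of signs and summation ranges, since $R_{n+1}$ now mixes many terms; conceptually the closedness is just the general principle that the obstruction to deforming a cocycle is automatically a cocycle for the deformed differential.

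Granting closedness, the equivalence itself is formal. The forward implication is immediate: if some admissible $X_0,\dots,X_n$ admits a continuation $X_{n+1}$, then $R_{n+1}=-\delta X_{n+1}$ is exact, so $[R_{n+1}]$ vanishes already in $H^2(C^\infty(M))$, a fortiori in $H^2_\pi$. For the converse I would invoke the perturbation observation recorded before the statement: replacing $X_n$ by $X_n-X'$ for an arbitrary vector field $X'$ leaves every lower-degree equation untouched, because $\delta X'=0$, while it alters $R_{n+1}$ only through the term $[B_1,X_n]$, changing its class by $-d_\pi X'$. If $[R_{n+1}]$ is trivial in Poisson cohomology, then $[R_{n+1}]=d_\pi X'$ for some genuine vector field $X'$; after this perturbation the class of the right hand side is zero in $H^2(C^\infty(M))$, so $R_{n+1}$ becomes $\delta$-exact and the desired $X_{n+1}$ exists. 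As $X'$ ranges over all of $\Gamma(\Lambda^1TM)$ this realises exactly the image of $d_\pi\colon\Gamma(\Lambda^1TM)\to\Gamma(\Lambda^2TM)$, and since coboundaries $d_\pi X'$ are invisible in $H^2_\pi$ the Poisson class is independent of the admissible choices, so the stated criterion is unambiguous.
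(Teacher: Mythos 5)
Your proposal is correct and follows essentially the same route as the paper: the obstruction is the Hochschild class of $\sum_{k=1}^{n+1}[B_k,X_{n+1-k}]$, its $d_\pi$-closedness is obtained from the Maurer--Cartan and inductive relations taken modulo $\delta$-exact cochains (your explicit primitive $\sum_{k=1}^{n+1}[B_{k+1},X_{n+1-k}]$ is exactly the pattern of the paper's $X_3$ computation), and the perturbation $X_n\mapsto X_n+X'$ by a vector field realizes precisely the $d_\pi$-coboundaries, giving both directions of the equivalence. The paper is in fact terser than you at the closedness step, so no discrepancy to report.
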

To prove this, we need just to show, that the element on the right is closed with respect to $d_\pi$, when we pass to cohomology. But this follows easily from the relations (modulo exact Hochschild cochains):
$$
[B_1,X_n]=\sum_{k=2}^{n+1}[B_k,X_{n+1-k}]
$$
and 
$$
[B_1,B_n]=\frac12\sum_{i=2}^{k-2}[B_i,B_{k-i}].
$$
These are just the relations we gave earlier, where we omit the Hochschild differential (since it in any case shall vanish on the level of cohomology).
\begin{rem}
\label{rem:partialcase}
Observe, that it follows from the results of the previous section, that \textit{given a Poisson vector field $X$, one can always find a sequence of operators $X=X_0,X_1,X_2,\dots$ so that the operator \xc\ will be a differentiation of $(\ac,\ast)$, i.e. so that all the obstructions we listed here will vanish.} However, this does not mean, that the obstructions we consider here are not necessary at all: in fact, they answer the question, whether the given set of operators can be considered as the first stage of a differentiation of \ac.

Also this approach will turn fruitful in the next section.
\end{rem}
\subsection{General case}
Let $\g$ be a Lie algebra, acting on a Poisson manifold $M$, i.e. represented in the Lie algebra $ D^1_\pi(C^\infty(M))$ of Poisson vector fields on $M$ that is vector fields, commuting with the Poisson bivector $\pi$. The question is: is it possible to extend this representation to a representation of $\g$ by derivations of the quantized algebra? In this section we assume, that the bidifferential operators $B_{2k-1}$ are antisymmetric and $B_{2k},\ k\ge 1$ are symmetric  (in particular, this is the case of the operators, constructed by Kontsevich's formula, see \cite{Kon97}).

In order to answer this question, we consider this map in a generic form 
$$
\Phi=\varphi_0+\hbar\varphi_1+\hbar^2\varphi_2+\dots:\g\to C^1_{loc}(C^\infty(M))[[\hbar]].
$$
We need to find $\Phi$ such that the conditions above would hold, i.e. that it is a representation of $\g$ in derivations of $(C^\infty(M)[[\hbar]],\ast)$. Just like in the previous section, one can start reasoning inductively: we assume, that the $0$-degree part of $\Phi$ is given by a representation $\varphi_0:\g\to Vect_\pi(M)$ of \g\ in the Lie algebra of Poisson vector fields on $M,\pi$. It is clear, that this map verifies both conditions (i.e. that its image consist of derivations of the deformed algebra and that it is a representation of \g) up to degree $1$ in parameter $\hbar$. Then we look for a ``correction term'' $\varphi_1:\g\to C^1_{loc}(C^\infty(M))$; the map $\varphi_1$ should be such, that the sum $\varphi_0+\hbar\varphi_1$ verifies the above mentioned conditions up to degree $2$ in $\hbar$. So when we restrict our attention to the degrees less than, or equal to $2$ in $\hbar$, we obtain the following two equalities:
 $$
\delta\varphi_1(\xi)=\delta\varphi_1(\eta)=0,\ [\varphi_1(\xi),\varphi_0(\eta)]+[\varphi_0(\xi),\varphi_1(\eta)]-\varphi_1([\xi,\eta])=0
 $$
for all elements $\xi,\eta\in\g$. Here, as before, $[,]$ denotes the Gerstenhaber brackets. It follows from the first equality, that $\varphi_1$ should take values in Hochschild cocycles. Similarly, the left hand side of the second equation here is equal to the Chevalley differential $\partial_\g(\varphi_1)(\xi,\eta)$ of the map $\varphi_1$ viewed as an element of Chevalley complex of \g\ with values in the complex of Hochschild cochains on which \g\ acts via the representation $\varphi_0$. Thus, the first stage of deformation can be achieved by choosing an arbitrary $1$-cocycle in the complex $C^*(\g,\,C^*(C^\infty(M)))$ (zero cocycle can also be a choice).

Now, the next stage gives the following equations on the element $\varphi_2$, the next term in the series $\varphi_0+\hbar\varphi_1+\hbar^2\varphi_2+\dots$:
$$
\begin{aligned}
\delta(\varphi_2(\xi))&=-[B_1,\varphi_1(\xi)]-[B_2,\varphi_0(\xi)],\\
\partial_\g(\varphi_2)(\xi,\eta)&=[\varphi_1(\xi),\varphi_1(\eta)].
\end{aligned}
$$
Once again, this equalities should hold for any $\xi,\,\eta\in\g$. The right hand side of the first equation is closed with respect to the Hochschild differential $\delta$ (this can be proved by the same calculation as above). It is also closed with respect to the Chevalley differential $\partial_\g$: we put
$$
\omega_2^1=[B_1,\varphi_1]+[B_2,\varphi_0]:\g\to C^2(C^\infty(M)),
$$
then we compute
$$
\begin{aligned}
\partial_\g(\omega_2^1)(\xi,\eta)&=\varphi_0(\xi)([B_1,\varphi_1(\eta)]+[B_2,\varphi_0(\eta)])-\varphi_0(\eta)([B_1,\varphi_1(\xi)]\\
                                               &\quad+[B_2,\varphi_0(\xi)])-[B_1,\varphi_1([\xi,\eta])]-[B_2,\varphi_0([\xi,\eta])]\\
                                               &=[\varphi_0(\xi),[B_1,\varphi_1(\eta)]+[\varphi_0(\xi),[B_2,\varphi_0(\eta)]]-[\varphi_0(\eta),[B_1,\varphi_1(\xi)]]\\
                                               &\quad-[\varphi_0(\eta),[B_2,\varphi_0(\xi)]]-[B_1,[\varphi_1(\xi),\varphi_0(\eta)]]\\
                                               &\quad-[B_1,[\varphi_0(\xi),\varphi_1(\eta)]]-[B_2,[\varphi_0(\xi),\varphi_0(\eta)]]=0.
\end{aligned}
$$
The last equality here follows from the skew-antisymmetry and Jacobi identity. The right hand side of the second equality (which we denote as $\omega_2^2$) is clearly closed with respect to the Hochschild differential; Chevalley differential $\partial_\g$, applied to it gives:
$$
\begin{aligned}
\partial_\g(\omega_2^1)(\xi,\eta,\zeta)&=[\varphi_0(\xi),[\varphi_1(\eta),\varphi_1(\zeta)]]-[\varphi_0(\eta),[\varphi_1(\xi),\varphi_1(\zeta)]]\\
                                                                       &\quad+[\varphi_0,(\zeta)[\varphi_1(\xi),\varphi_1(\eta)]]+[\varphi_1([\xi,\eta]),\varphi_1(\zeta)]\\
                                                                       &\quad-[\varphi_1([\xi,\zeta]),\varphi_1(\eta)]+[\varphi_1([\eta,\zeta]),\varphi_1(\xi)],
\end{aligned}
$$
which is equal to $0$, because of the Jacobi identity and the assumption, that $\varphi_1$ is a Chevalley cocycle. Thus, the sum $\omega_2^1+\omega_2^2$ is a closed element in the bicomplex $C^*(\g,C^*(C^\infty(M)))$, the Chevalley complex of $\g$ with coefficients in the Hochschild complex of $C^\infty(M)$. In order to be able to find $\varphi_2$ we must choose $\varphi_1$ so, that the cohomology class of this element were equal to $0$. To this end we can vary $\varphi_1$ a little bit so, that it would remain closed with respect to both Hochschild and Chevalley differentials (i.e. so that the previous conditions still hold). The first condition means, that we can only add a Chevalley 1-cochain on $\g$ with values in vector fields on $M$, while the second condition says, that this correction term should be closed with respect to $\partial_g$. In other words, we can add to $\varphi_1$ an arbitrary Chevalley $1$-cocycle $\psi:\g\to Vect(M)$.

This correction term changes the first equation for $\varphi_2$  by adding a new term of the form $[B_1,\psi(\xi)]$. When we pass to Hochschild homology, this term will turn into the Lichnerowicz's Poisson cohomology differential. Thus, we can interpret the first equation as follows: consider the double complex $C^*(\g,CP^*(M))$, i.e. the Chevalley complex of \g\ with coefficients in the Lichnerowicz's complex of $M$. Then the element $[B_2,\varphi_0]$ in $C^1(\g,C^2(C^\infty(M)))$ is closed with respect to both differentials (to see this, just observe, that the terms in its Chevalley and Hochschild differentials above kill each other, and do not interfere with the differentials of $[B_1,\varphi_1]$), in particular, with respect to the Hochschild differential. Thus, it induces an element in $C^1(\g,CP^2(M))$, closed with respect to the Chevalley differential. An easy calculation, similar to the computations from the previous sections, shows that $d_\pi$ vanishes on it too. Thus, it gives an element $\tilde\omega_2$ in the bicomplex cohomology, i.e. in $H^3(\g,CP^*(M))$. Then $\tilde\omega_2$ is equal to zero, iff one can find an element $c=c^0+c^1+c^2$ in $C^0(\g,CP^2(M))\oplus C^1(\g,CP^1(M))\oplus C^2(\g,CP^0(M))$, such that $\partial_\g c+d_\pi c=[B_2,\varphi_0]$. Comparing the bidegrees on both sides, we see, that
$$
d_\pi c^0=0,\ \partial_\g c^0+d_\pi c^1=[B_2,\varphi_0],\ \partial_\g c^1+d_\pi c^2=0\ \mbox{and}\ \partial_\g c^2=0.
$$
This is a bit less than what one should look for: in fact, we need $c^0=c^2=0$. In this case we would have
$$
d_\pi c^1= [B_2,\varphi_0],\ \partial_\g c^1=0,
$$
where $c^1$ is a Chevalley $1$-cochain on \g\ with values in vector fields on $M$. Taking $\varphi_1=-c^1$, we conclude, that the Hochschild class of $[B_1,\varphi_1]+[B_2,\varphi_0]$ is equal to zero in this case, hence we can find $\varphi_2$, verifying the equation 
\begin{equation}
\label{ext21}
\delta(\varphi_2(\xi))=[B_1,\varphi_1(\xi)]+[B_2,\varphi_0(\xi)].
\end{equation}
Thus, we should consider the sub-bicomplex 
$$
\tilde C^*(\g,CP^*(M))=\bigoplus_{p,q>1}C^p(\g,CP^q(M))\subseteq C^*(\g,CP^*(M)).
$$
We conclude, that \textit{there exists an extension $\varphi_2$, verifying the equality \eqref{ext21}, iff the class of $[B_2,\varphi_0]$ in the cohomology of $\tilde C^*(\g,CP^*(M))$ is equal to $0$.}

Let us now suppose, that the equation \eqref{ext21} holds and consider the second equality on $\varphi_2$ i.e.
\begin{equation}
\label{ext22}
\partial_\g(\varphi_2)(\xi,\eta)=[\varphi_1(\xi),\varphi_1(\eta)].
\end{equation}
It is easy to see, that the expression on the right hand side is closed with respect to the Hochschild differential. On the other hand, if we apply Hochschild differential to the left hand side, we shall get $0$, because
$$
\delta(\partial_\g(\varphi_2))=-\partial_\g(\delta\varphi_2)=-\partial_\g([B_1,\varphi_1(\xi)]+[B_2,\varphi_0(\xi)])=0.
$$
Thus, we can pass to the Hochschild cohomology on both sides. Consider the corresponding element in $C^*(\g,CP^*(M))$ (i.e. the difference between the cohomology classes from the left and the right side of equation \eqref{ext22}). Arguing just like in the previous section, one can show that it is closed with respect to both differentials of this complex. On the other hand, we cannot change $\varphi_1$ otherwise, but by adding a $d_\pi$-closed $1$-cocycle on \g, if we don't want to spoil the equality \eqref{ext21}. For example such correction cocycle can be given by the formula $d_\pi(f(\xi))$, where $f:\g\to C^\infty(M)$ is a $C^\infty(M)$-valued $1$-dimensional \g-cocycle (i.e. the value of this map will be in the space of Hamiltonian vector fields on $M$); if the Poisson structure we use is in fact symplextic, than this is (locally) a unique choice. This operation will not change the cohomology class of the element in $C^*(\g,CP^*(M))$ since modulo closed (with respect to Hochschild differential) elements we have
$$
[d_\pi f(\xi),\varphi_1(\eta)]=[[B_1,f(\xi)],\varphi_1(\eta)]=-[B_1,\varphi_1(\eta)(f(\xi))]+[f(\xi),[B_2,\varphi_0(\eta)]],
$$
where the last term is equal to $0$, since $B_2$, and hence $[B_2,\varphi_0(\eta)]$ is symmetric bidifferential operator; so
$$
[B_2,\varphi_0(\eta)](f(\xi),g)-[B_2,\varphi_0(\eta)](g,f(\xi))=0
$$
for all $g$. Similarly, we can change $\varphi_2$ only by a $1$-cochain $c:\g\to CP^1(M)$, i.e. by a cochain with values in vector fields (so that the Hochschild differential of $\varphi_2$ remains unchanged).

Thus, we conclude, that the question, whether it is possible to choose $\varphi_2$ verifying \eqref{ext21} so that the condition \eqref{ext22} holds, can be reduced to the following: choose arbitrary $\varphi_2$, verifying \eqref{ext21}, then consider the difference $\partial_\g\varphi_2-[\varphi_1,\varphi_1]$ as an element in $H^2(\g,CP^1(M))$. If this element is trivial, then we can further change $\varphi_2$ as needed.

Now, we can pass in a similar way to the case $n=3$: then we have the following two equations
\begin{align}
\label{ext31}
\delta\varphi_3(\xi)&=[B_1,\varphi_2(\xi)]+[B_2,\varphi_1(\xi)]+[B_3,\varphi_0(\xi)]\\
\label{ext32}
(\partial_\g\varphi_3)(\xi,\eta)&=[\varphi_2(\xi),\varphi_1(\eta)]+[\varphi_2(\eta),\varphi_1(\xi)].
\end{align}

Now we want to make \eqref{ext31} hold without disrupting \eqref{ext21} and \eqref{ext22}. This means, that we can change $\varphi_2$ only by adding to it a closed $1$ \g-cochain with values in $CP^1(M)$. On the other hand, reasoning as above, we see, that the right hand side of equation \eqref{ext31} is closed with respect to the Hochschild differential $\delta$ and (when we pass to the cohomology) with respect to the Poisson differential $d_\pi$ and Chevalley differential $\partial_\g$. Thus, as before we conclude: \textit{one can choose $\varphi_3$, so that the equality \eqref{ext31} would hold, if the class of the right hand side of this equation in the cohomology of bicomplex $\tilde C^*(\g,CP^*(M))$ vanishes.}

Further, as before, changing $\varphi_3$ by a $CP^1(M)$-valued $1$ \g-cochain, we see, that \textit{one can choose $\varphi_3$ so, that \eqref{ext32} would hold, if the class of the difference $\partial_\g\varphi_3(\xi,\eta)-[\varphi_2(\xi),\varphi_1(\eta)]+[\varphi_2(\eta),\varphi_1(\xi)]$ in Chevalley cohomology $H^2(\g,CP^1(M))$ is trivial.}

Finally, reasoning by induction we obtain the following general statement:
\begin{prop}
Suppose, that we have found the maps $\varphi_1,\varphi_2,\dots,\varphi_n$ so that the conditions on $\Phi_n=\varphi_0+\hbar\varphi_1+\dots+\hbar^n\varphi_n$ hold up to $\hbar^n$. The one can choose $\varphi_{n+1}$, so that for the map $\Phi_n+\hbar^{n+1}\varphi_{n+1}$ the first condition (i.e. that this map is derivation) would hold up to degree $n+1$ in $\hbar$, if the class of 
$$
\omega'_n(\xi)=[B_1,\varphi_n(\xi)]+[B_2,\varphi_{n-1}(\xi)]+\dots+[B_{n+1},\varphi_0(\xi)]
$$
in the cohomology of bicomplex $\tilde C^*(\g,CP^*(M))$ vanishes. Further, one can choose this same $\varphi_{n+1}$ so, that the second condition (i.e. that that this map is a representation of \g) would also hold up to $\hbar^{n+1}$, if the class of the element 
$$
\begin{aligned}
\omega''_n&=\partial_\g\varphi_{n+1}(\xi,\eta)-[\varphi_n(\xi),\varphi_1(\eta)]-[\varphi_{n-1}(\xi),\varphi_2(\eta)]-\dots-[\varphi_(\xi),\varphi_1(\eta)]\\
                  &\quad+[\varphi_n(\eta),\varphi_1(\xi)]-[\varphi_{n-1}(\eta),\varphi_2(\xi)]-\dots-[\varphi_(\eta),\varphi_1(\xi)]
\end{aligned}
$$
in Chevalley cohomology $H^2(\g,CP^1(M))$ is trivial.
\end{prop}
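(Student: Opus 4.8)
The plan is to run the same two-step induction that produced \eqref{ext21}--\eqref{ext22} and \eqref{ext31}--\eqref{ext32}, now with an arbitrary number of lower terms. Expanding the derivation condition \eqref{eqsymm} for $\Phi_n+\hbar^{n+1}\varphi_{n+1}$ and the homomorphism condition $\widehat{[\xi,\eta]}=[\hat\xi,\hat\eta]$ to order $\hbar^{n+1}$ gives two equations on $\varphi_{n+1}$,
$$
\delta\varphi_{n+1}(\xi)=\omega'_n(\xi),\qquad
\partial_\g\varphi_{n+1}(\xi,\eta)=\sum_{\substack{p+q=n+1\\ p,q\ge1}}[\varphi_p(\xi),\varphi_q(\eta)],
$$
so the whole statement reduces to controlling the two right-hand sides as cohomology classes and to tracking the gauge freedom still available at stage $n+1$.

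For the first equation I would start by checking that $\omega'_n(\xi)=\sum_{k=1}^{n+1}[B_k,\varphi_{n+1-k}(\xi)]$ is a Hochschild cocycle: applying $\delta$ and substituting the inductive relations $\delta\varphi_m(\xi)=\omega'_{m-1}(\xi)$ together with the Maurer--Cartan relations $\delta B_k=-\frac12\sum_i[B_i,B_{k-i}]$ collapses everything into the triple sum $\tfrac12\sum_{p+q+r=n+1}\bigl([\varphi_r(\xi),[B_p,B_q]]+2[B_p,[B_q,\varphi_r(\xi)]]\bigr)$ that already appeared in the verification of \eqref{eqstagen} and vanishes by the graded Jacobi identity. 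Passing to Hochschild cohomology $CP^*(M)$, I would then reuse the two auxiliary computations to see that the class is closed for the Lichnerowicz--Poisson differential $d_\pi=[\pi,\cdot]$ (rewrite $[B_1,\varphi_n]$ modulo $\delta$-exact terms via the relations displayed after \eqref{eqstagen}, then use $[\pi,\pi]=0$) and for the Chevalley differential $\partial_\g$ (the verbatim analogue of the $\partial_\g\omega_2^1=0$ computation, using that $\varphi_0$ acts by Poisson fields and that $\varphi_1,\dots,\varphi_n$ satisfy all lower equations). Thus $\omega'_n$ gives a class in the total cohomology of $C^*(\g,CP^*(M))$.

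The delicate step is passing from the vanishing of a total class to solvability. A primitive $\varphi_{n+1}$ of $\delta\varphi_{n+1}(\xi)=\omega'_n(\xi)$ must have the bidegree of a vector-field-valued $\g$-$1$-cochain, i.e.\ the solution $c$ of $(\partial_\g+d_\pi)c=\omega'_n$ must have vanishing $c^0$ and $c^2$ components; as in the $n=2$ discussion this is exactly what restriction to $\tilde C^*(\g,CP^*(M))=\bigoplus_{p,q>1}C^p(\g,CP^q(M))$ encodes, so solvability is equivalent to the vanishing of the class of $\omega'_n$ there, which is the first assertion. For the second equation I would assume the first already holds; then $\delta\partial_\g\varphi_{n+1}=\pm\partial_\g\delta\varphi_{n+1}=\pm\partial_\g\omega'_n$ (the $\g$-action commutes with $\delta$ since $\varphi_0(\xi)$ is a derivation), and one checks that the bracket sum has the same Hochschild differential, so the difference $\omega''_n$ is a $\delta$-cocycle valued, after passing to cohomology, in $CP^1(M)$. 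Its $\partial_\g$-closedness follows from the Jacobi computation already made for $\omega_2^2$ and \eqref{ext32}, using that each $\varphi_j$ is a Chevalley cocycle through degree $n$, giving a class in $H^2(\g,CP^1(M))$; the residual freedom --- adjusting $\varphi_{n+1}$ by a vector-field-valued $\g$-$1$-cochain, and $\varphi_1$ by a $d_\pi$-closed Hamiltonian cocycle, neither disturbing the derivation equations (the cancellation $[d_\pi f(\xi),\varphi_1(\eta)]\equiv0$ forced by the symmetry of $B_2$ carries over) --- is exactly enough to absorb a $\partial_\g$-coboundary, so $\varphi_{n+1}$ can be corrected to satisfy the representation equation iff this class is trivial.

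The main obstacle I anticipate is not any single identity but the uniform-in-$n$ bookkeeping: showing that the non-Hochschild-closed parts of $\partial_\g\varphi_{n+1}$ and of the bracket sum cancel so that $\omega''_n$ is genuinely a cocycle, that $d_\pi$ and $\partial_\g$ annihilate $\omega'_n$ simultaneously, and --- most subtly --- that every perturbation used to kill an obstruction at stage $n+1$ preserves all equations established at lower order. Keeping precise track of which corrections survive these constraints (vector-field-valued $\g$-cocycles for the derivation condition, $d_\pi$-closed cocycles for the representation condition) is what makes the restriction to the sub-bicomplex $\tilde C^*$ essential, and is the part that must be argued rather than merely computed.
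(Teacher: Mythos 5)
Your proposal is correct and follows essentially the same route as the paper: the paper's own ``proof'' of this proposition is precisely the induction you describe, obtained by iterating the $n=2$ and $n=3$ computations (the cocycle verification via Maurer--Cartan and the graded Jacobi identity, closedness under $d_\pi$ and $\partial_\g$ after passing to cohomology, and the gauge-freedom bookkeeping that forces the restriction to the sub-bicomplex $\tilde C^*(\g,CP^*(M))$ for the first obstruction and to $H^2(\g,CP^1(M))$ for the second). The ``delicate steps'' you flag (solvability versus vanishing of the total class, and preservation of lower-order equations under perturbations) are exactly the points the paper handles by the same arguments you outline, so there is no substantive divergence.
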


\medskip
The author would like to express his gratitude for hospitality and wonderful working conditions to the University of Angers, and VIASM, where part of this work was completed. I would also like to thank Andrey Konyaev and Vladimir Roubtsov for numerous fruitful discussions.

\end{document}